\numberwithin{equation}{section}
\newtheorem{Theorem}{Theorem}[section]
\newtheorem{Definition}{Definition}[section]
\newtheorem{Proposition}[Theorem]{Proposition}
\newtheorem{Lemma}{Lemma}[section]
\newtheorem{Corollary}[Theorem]{Corollary}
 \def\@biblabel#1{#1.}
\newcommand{\x}{{\bf x}}
\newcommand{\y}{{\bf y}}
\newcommand{\z}{{\bf z}}
\newcommand{\q}{{\bf q}}
\newcommand{\e}{{\bf e}}
\newcommand{\0}{{\bf 0}}
\newcommand{\w}{{\bf w}}
\begin{document}
\title{Tensor Complementarity Problem and Semi-positive Tensors}

\author{Yisheng Song\thanks{Corresponding author. School of Mathematics and Information Science, Henan Normal University, XinXiang HeNan,  P.R. China, 453007.
Email: songyisheng1@gmail.com. The work was partially supported by the National Natural Science Foundation of P.R. China (Grant No. 11171094, 11271112, 61262026), NCET Programm of the Ministry of Education (NCET 13-0738),  science and technology programm of Jiangxi Education Committee (LDJH12088).}\quad \quad Liqun Qi\thanks{ Department of Applied Mathematics, The Hong Kong Polytechnic University, Hung Hom, Kowloon, Hong Kong. Email: maqilq@polyu.edu.hk. This author's work was supported by the Hong Kong Research Grant Council (Grant No. PolyU
502111, 501212, 501913 and 15302114).}}

\date{}

 \maketitle

%---------------------------------------------------------------------------------Abstract
\begin{abstract}
\noindent  %\vspace{3mm}
     The tensor complementarity problem $(\q, \mathcal{A})$ is to
	$$\mbox{ find } \x \in \mathbb{R}^n\mbox{ such that }\x \geq \0, \q + \mathcal{A}\x^{m-1} \geq \0, \mbox{ and }\x^\top (\q + \mathcal{A}\x^{m-1}) = 0.$$ We prove that  a real tensor $\mathcal{A}$ is a (strictly) semi-positive tensor if and only if the tensor complementarity problem $(\q, \mathcal{A})$ has a unique solution for $\q>\0$ ($\q\geq\0$), and  a symmetric real tensor is a (strictly) semi-positive tensor if and only if it is (strictly) copositive.  That is, for a  strictly copositive  symmetric  tensor $\mathcal{A}$,   the tensor complementarity problem $(\q, \mathcal{A})$ has a solution for all $\q \in \mathbb{R}^n$.
	\vspace{3mm}

\noindent {\bf Key words:}\hspace{2mm} Tensor complementarity,   strictly semi-positive,  strictly copositive, unique solution.  \vspace{3mm}

\noindent {\bf AMS subject classifications (2010):}\hspace{2mm}
47H15, 47H12, 34B10, 47A52, 47J10, 47H09, 15A48, 47H07.
  \vspace{3mm}
\end{abstract}

%------------------------------------------------------------------------------Section 1
\section{Introduction}
It is well-known that the linear complementarity problem (LCP) is the first-order optimality conditions of quadratic programming, which has wide applications in  applied science and technology such as optimization and physical or economic equilibrium problems.
	Let $A = (a_{ij})$ be an $n \times n $ real matrix.  The linear complementarity problem, denoted by LCP $(\q,A)$,  is to
	\begin{equation}\label{eq:11}
	\mbox{ find } \z \in \mathbb{R}^n\mbox{ such that }\z \geq \0, \q + A\z \geq \0, \mbox{ and }\z^\top (\q + A\z) = 0.
	\end{equation}

By means of the linear complementarity problem, properties of (strictly) semi-monotone matrices were considered by Cottle and Dantzig \cite{CD68}, Eaves \cite{E71} and Karamardian \cite{K72}, see also Han, Xiu and Qi \cite{HXQ},  Facchinei and Pang\cite{FP11} and Cottle, Pang and Stone \cite{CPS}. A real matrix $A$ is said to be
	\begin{itemize}
		\item[(i)] {\bf semi-monotone (or semi-positive)} iff for each $\x\geq0$ and $\x\ne\0$, there exists an index $k\in I_n$ such that $$x_k>0\mbox{ and }\left(A \x\right)_k\geq0;$$
		\item[(ii)]  {\bf strictly semi-monotone (or strictly semi-positive)} iff for each $\x\geq\0$ and $\x\ne\0$, there exists an index $k\in I_n$ such that $$x_k>0\mbox{ and }\left(A\x\right)_k>0;$$
		\item[(iii)] {\bf copositive} iff
		$\x^\top A \x\geq 0$ for all  $\x\in \mathbb{R}^n_+$;
		\item[(iv)] {\bf strictly copositive} iff
		$\x^\top A \x>0$ for all  $\x\in \mathbb{R}^n_+\setminus\{\0\}$.\end{itemize}

Pang \cite{P79, P81} and Gowda \cite{S90} presented that some relations between the solution of  the LCP $(\q,A)$ and (strictly) semi-monotone.   Cottle \cite{C80} showed that each completely Q-matrix is a strictly semi-monotone matrix.    Eaves \cite{E71} gave an equivalent definition of strictly semi-monotone matrices using the linear complementarity problem. The concept of (strictly) copositive matrices is one of the most
important concept in applied mathematics and graph theory, which was
introduced by Motzkin \cite{TSM} in 1952. In the literature, there
are extensive discussions on such matrices \cite{HH}, \cite{DM}, \cite{HV}.

The nonlinear complementarity problem has been systematically studied  in
the mid-1960s and has developed into a very fruitful discipline in the field of mathematical programming, that included a multitude of interesting connections to numerous disciplines and a wide range of important applications in engineering and economics. We will study the tensor complementarity problem,  a special structured nonlinear complementarity problem.  Let $\mathcal{A} = (a_{i_1\cdots i_m})$ be a real $m$th order $n$-dimensional tensor (hypermatrix).
	The tensor complementarity problem, denoted by TCP $(\q, \mathcal{A})$, is to
	\begin{equation}\label{eq:12} \mbox{ find } \x \in \mathbb{R}^n\mbox{ such that }\x \geq \0, \q + \mathcal{A}\x^{m-1} \geq \0, \mbox{ and }\x^\top (\q + \mathcal{A}\x^{m-1}) = 0.\end{equation}
Clearly, the tensor complementarity problem  is the first-order optimality conditions of the homogeneous polynomial optimization problem, which may be referred to as a direct and natural extension of the linear complementarity problem. The tensor complementarity problem  TCP $(\q, \mathcal{A})$ is a specially  structured nonlinear  complementarity problem,  and so, the TCP  $(\q, \mathcal{A})$ has its particular and nice properties other than ones of the classical NCP(F).

In this paper,  we will study some relationships between the unique solution of the tensor complementarity problem and  (strictly) semi-positive tensors. We will prove that a symmetric tensor is (strictly) semi-positive if and only if it is (strictly) copositive.

In Section 2, we will give some  definitions and basic conclusions. We will show that all diagonal entries of a semi-positive tensor are nonnegative, and all diagonal entries of a strictly semi-positive tensor are positive.

In Section 3, We will prove that  a real tensor $\mathcal{A}$ is a  semi-positive tensor if and only if the TCP $(\q, \mathcal{A})$ has no non-zero vector solution for $\q>\0$ and  a real tensor $\mathcal{A}$ is a strictly semi-positive tensor if and only if the TCP $(\q, \mathcal{A})$ has no non-zero vector solution for $\q\geq\0$.  We show that a symmetric real tensor is  semi-positive if and only if it is  copositive and a symmetric real tensor is a strictly semi-positive if and only if it is strictly copositive.  That is, for a  strictly copositive  symmetric  tensor $\mathcal{A}$,   the TCP $(\q, \mathcal{A})$ has a solution for all $\q \in \mathbb{R}^n$.	
	
\section{Preliminaries}
\hspace{4mm}
Throughout this paper, we use small letters $x, y, v, \alpha, \cdots$, for scalars, small bold
letters $\x, \y,  \cdots$, for vectors, capital letters $A, B,
\cdots$, for matrices, calligraphic letters $\mathcal{A}, \mathcal{B}, \cdots$, for
tensors.  All the tensors discussed in this paper are real.  Let $I_n := \{ 1,2, \cdots, n \}$, and $\mathbb{R}^n:=\{(x_1, x_2,\cdots, x_n)^\top;x_i\in  \mathbb{R},  i\in I_n\}$, $\mathbb{R}^n_{+}:=\{x\in \mathbb{R}^n;x\geq\0\}$, $\mathbb{R}^n_{-}:=\{\x\in \mathbb{R}^n;x\leq\0\}$, $\mathbb{R}^n_{++}:=\{\x\in \mathbb{R}^n;x>\0\}$, where $\mathbb{R}$ is the set of real numbers, $\x^\top$ is the transposition of a vector $\x$, and $\x\geq\0$ ($\x>\0$) means $x_i\geq0$ ($x_i>0$) for all $i\in I_n$.

In 2005, Qi \cite{Qi} introduced the concept of
positive (semi-)definite symmetric tensors.   A real $m$th order $n$-dimensional tensor (hypermatrix) $\mathcal{A} = (a_{i_1\cdots i_m})$ is a multi-array of real entries $a_{i_1\cdots
	i_m}$, where $i_j \in I_n$ for $j \in I_m$. Denote the set of all
real $m$th order $n$-dimensional tensors by $T_{m, n}$. Then $T_{m,
	n}$ is a linear space of dimension $n^m$. Let $\mathcal{A} = (a_{i_1\cdots
	i_m}) \in T_{m, n}$. If the entries $a_{i_1\cdots i_m}$ are
invariant under any permutation of their indices, then $\mathcal{A}$ is
called a {\bf symmetric tensor}.  Denote the set of all real $m$th
order $n$-dimensional tensors by $S_{m, n}$. Then $S_{m, n}$ is a
linear subspace of $T_{m, n}$.  We
denote the zero tensor in $T_{m, n}$ by $\mathcal{O}$.   Let $\mathcal{A} =(a_{i_1\cdots i_m}) \in
T_{m, n}$ and $\x \in \mathbb{R}^n$. Then $\mathcal{A} \x^{m-1}$ is a vector in $\mathbb{R}^n$ with
its $i$th component as
$$\left(\mathcal{A} \x^{m-1}\right)_i: = \sum_{i_2, \cdots, i_m=1}^n a_{ii_2\cdots
	i_m}x_{i_2}\cdots x_{i_m}$$ for $i \in I_n$.
 Then $\mathcal{A} \x^m$ is a homogeneous
 polynomial of degree $m$, defined by
 $$\mathcal{A} \x^m:= \x^\top(\mathcal{A} \x^{m-1})= \sum_{i_1,\cdots, i_m=1}^n a_{i_1\cdots i_m}x_{i_1}\cdots
 x_{i_m}.$$
 A tensor $\mathcal{A} \in T_{m, n}$ is called {\bf positive
semi-definite} if for any vector $\x \in \mathbb{R}^n$, $\mathcal{A} \x^m \ge 0$,
and is called {\bf positive definite} if for any nonzero vector $\x
\in \mathbb{R}^n$, $\mathcal{A} \x^m > 0$.
 Recently, miscellaneous structured tensors are widely studied, for example,
Zhang, Qi and Zhou \cite{ZQZ} and
 Ding, Qi and Wei \cite{DQW} for M-tensors, Song and Qi \cite{SQ-15} for P-(P$_0$)tensors and B-(B$_0$)tensors, Qi and Song \cite{QS} for  B-(B$_0$)tensors,  Song and Qi \cite{SQ1} for infinite and finite dimensional Hilbert tensors, Song and Qi \cite{SQ} for E-eigenvalues of weakly symmetric nonnegative tensors and so on.

 Recently, Song and Qi \cite{SQ2015} extended the concepts of  (strictly) semi-positive matrices and the linear complementarity problem to  (strictly) semi-positive tensors and the tensor complementarity problem, respectively. Moreover, some nice properties of those concepts were obtained.
 \begin{Definition} \label{d21}\em
	Let $\mathcal{A}  = (a_{i_1\cdots i_m}) \in T_{m, n}$.   $\mathcal{A}$ is said to be\begin{itemize}
\item[(i)] {\bf semi-positive} iff for each $\x\geq0$ and $\x\ne\0$, there exists an index $k\in I_n$ such that $$x_k>0\mbox{ and }\left(\mathcal{A} \x^{m-1}\right)_k\geq0;$$
\item[(ii)]  {\bf strictly semi-positive} iff for each $\x\geq\0$ and $\x\ne\0$, there exists an index $k\in I_n$ such that $$x_k>0\mbox{ and }\left(\mathcal{A} \x^{m-1}\right)_k>0;$$
\item[(iii)]  {\bf  Q-tensor} iff the TCP $(\q, \mathcal{A})$ has a  solution for all $\q \geq \mathbb{R}^n$.
\end{itemize}
\end{Definition}

\begin{Lemma}\label{le21}\em(Song and Qi \cite[Corollary 3.3, Theorem 3.4]{SQ2015}) Each strictly semi-positive tensor must be  a Q-tensor.
\end{Lemma}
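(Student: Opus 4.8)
The plan is to establish the Q-tensor property---existence of a solution of the TCP $(\q,\mathcal{A})$ for every $\q\in\mathbb{R}^n$---by a topological degree argument in which the hypothesis of strict semi-positivity enters only through the homogeneous problem. First I would recast solvability as a zero-finding problem for the continuous \emph{natural map}
\[
F_\q(\x):=\x-\bigl[\x-(\q+\mathcal{A}\x^{m-1})\bigr]_+,
\]
where $[\cdot]_+$ is the componentwise projection onto $\mathbb{R}^n_+$; one checks immediately that $\x$ solves the TCP $(\q,\mathcal{A})$ if and only if $F_\q(\x)=\0$. Since $\mathcal{A}\x^{m-1}$ is polynomial and the projection is Lipschitz, $F_\q$ is continuous, so the Brouwer degree is available.

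The hard part will be an \emph{a priori bound} on solutions that is uniform along a homotopy in the right-hand side. Fix $\q_0>\0$ and set $\q_t:=(1-t)\q_0+t\q$ for $t\in[0,1]$; I claim the set of all $\x$ solving the TCP $(\q_t,\mathcal{A})$ for some $t$ is bounded. If not, choose solutions $\x^{(k)}$ of the TCP $(\q_{t_k},\mathcal{A})$ with $\|\x^{(k)}\|\to\infty$ and normalize $\y^{(k)}:=\x^{(k)}/\|\x^{(k)}\|$; passing to a subsequence, $t_k\to\bar t$ and $\y^{(k)}\to\y$ with $\y\geq\0$ and $\|\y\|=1$. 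Dividing the feasibility relation $\q_{t_k}+\mathcal{A}(\x^{(k)})^{m-1}\geq\0$ by $\|\x^{(k)}\|^{m-1}$ and using homogeneity gives $\mathcal{A}\y^{m-1}\geq\0$ in the limit, while dividing the complementarity identity $(\x^{(k)})^\top(\q_{t_k}+\mathcal{A}(\x^{(k)})^{m-1})=0$ by $\|\x^{(k)}\|^{m}$ gives $\mathcal{A}\y^{m}=0$. Hence $\y\geq\0$, $\y\neq\0$, $\mathcal{A}\y^{m-1}\geq\0$ and $\y^\top(\mathcal{A}\y^{m-1})=0$, so each index with $y_i>0$ forces $(\mathcal{A}\y^{m-1})_i=0$; this contradicts strict semi-positivity, which produces an index $k$ with $y_k>0$ and $(\mathcal{A}\y^{m-1})_k>0$. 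Because $\{\q_t\}$ is compact, the contradiction persists through the limit $t_k\to\bar t$, and a single radius $R$ works for all $t$.

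Given the bound, the rest is routine. The map $H(t,\x):=\x-[\x-(\q_t+\mathcal{A}\x^{m-1})]_+$ is an admissible homotopy on the ball $B_R$, since it cannot vanish on $\{\|\x\|=R\}$, so $\deg(F_\q,B_R,\0)=\deg(F_{\q_0},B_R,\0)$. To evaluate the latter I would use that for $\q_0>\0$ the only solution of the TCP $(\q_0,\mathcal{A})$ is $\x=\0$: any nonzero solution would, by strict semi-positivity, have some $x_k>0$ with $(\mathcal{A}\x^{m-1})_k>0$, whence $(\q_0+\mathcal{A}\x^{m-1})_k>0$ and $x_k(\q_0+\mathcal{A}\x^{m-1})_k>0$, violating complementarity. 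Thus $\0$ is the unique zero of $F_{\q_0}$ in $B_R$; moreover, since $\q_0+\mathcal{A}\x^{m-1}>\0$ for $\x$ near $\0$, one has $F_{\q_0}(\x)=\x$ there, so excision yields $\deg(F_{\q_0},B_R,\0)=1$. Therefore $\deg(F_\q,B_R,\0)=1\neq0$, the map $F_\q$ has a zero in $B_R$, and the TCP $(\q,\mathcal{A})$ is solvable; as $\q$ is arbitrary, $\mathcal{A}$ is a Q-tensor.
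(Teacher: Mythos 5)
Your proof is correct, but it is not the paper's route: the paper offers no proof at all for this lemma, citing instead Corollary 3.3 and Theorem 3.4 of Song and Qi (SQ2015), where the argument factors through an intermediate class --- one first shows a strictly semi-positive tensor is an R-tensor (the associated augmented homogeneous system $x_i>0\Rightarrow(\mathcal{A}\x^{m-1})_i+t=0$, $t\geq 0$, has no nonzero solution), and then invokes the separately established fact that every R-tensor is a Q-tensor. Your proposal inlines the topological content directly: the natural-map reformulation $F_\q(\x)=\x-[\x-(\q+\mathcal{A}\x^{m-1})]_+$, a homotopy $\q_t=(1-t)\q_0+t\q$ from a fixed $\q_0>\0$, an a priori bound obtained by the standard normalization-and-limit argument (where strict semi-positivity kills the limiting nonzero solution of the TCP $(\0,\mathcal{A})$, since complementarity forces $(\mathcal{A}\y^{m-1})_k=0$ at every index with $y_k>0$), and the degree computation $\deg(F_{\q_0},B_R,\0)=1$ via uniqueness of the zero solution for $\q_0>\0$ and the local identity $F_{\q_0}=\mathrm{id}$ near $\0$. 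All steps check out, including the division by $\|\x^{(k)}\|^{m-1}$ and $\|\x^{(k)}\|^m$ (valid for $m\geq 2$) and the admissibility of the homotopy on $\partial B_R$. What your approach buys is self-containedness --- no appeal to the R-tensor machinery or to external NCP existence theorems --- at the cost of invoking Brouwer degree explicitly; the cited route buys modularity, since the R-tensor $\Rightarrow$ Q-tensor implication is reusable for other tensor classes. It is also worth noting that your argument isolates exactly what strict semi-positivity is used for: the R$_0$-type property (no nonzero solution of the TCP $(\0,\mathcal{A})$) plus uniqueness at a single $\q_0>\0$, both of which are instances of conditions (ii) and (iii) of Theorems \ref{th:31} and \ref{th:32} of the paper.
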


\begin{Proposition}\em \label{pro:21}
	Let $\mathcal{A}\in T_{m,n}$. Then
	\begin{itemize}
		\item[(i)] $a_{ii\cdots i}\geq0$ for all $i\in I_n$ if  $\mathcal{A}$ is semi-positive;
		\item[(ii)] $a_{ii\cdots i}>0$ for all $i\in I_n$ if  $\mathcal{A}$ is strictly semi-positive;
		\item[(iii)] there exists $k\in I_n$ such that  $\sum\limits_{i_2, \cdots, i_m=1}^n a_{ki_2\cdots
			i_m}\geq0$ if  $\mathcal{A}$ is semi-positive;
		\item[(vi)] there exists $k\in I_n$ such that  $\sum\limits_{i_2, \cdots, i_m=1}^n a_{ki_2\cdots
			i_m}>0$ if  $\mathcal{A}$ is strictly semi-positive.
		\end{itemize}
\end{Proposition}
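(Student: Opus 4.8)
The plan is to test the defining condition of (strict) semi-positivity against carefully chosen nonnegative vectors, so that each conclusion drops out from a direct evaluation of $\mathcal{A}\x^{m-1}$. For the diagonal statements, parts (i) and (ii), the natural test vectors are the standard basis vectors $\e_i$; for the row-sum statements, parts (iii) and (iv), the natural test vector is the all-ones vector $\1$. In each case the point is that a well-chosen $\x\geq\0$, $\x\neq\0$ collapses the multilinear sum defining $\mathcal{A}\x^{m-1}$ into exactly the quantity we wish to sign.

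For parts (i) and (ii), fix $i\in I_n$ and apply Definition \ref{d21} to $\x=\e_i$, which satisfies $\e_i\geq\0$ and $\e_i\neq\0$. The only positive coordinate of $\e_i$ is the $i$th one, so the index $k$ guaranteed by the definition, which must satisfy $x_k>0$, is forced to be $k=i$. A direct computation shows that the only surviving term in $\left(\mathcal{A}\e_i^{m-1}\right)_i=\sum_{i_2,\cdots,i_m=1}^n a_{i i_2\cdots i_m}(\e_i)_{i_2}\cdots(\e_i)_{i_m}$ is the one with $i_2=\cdots=i_m=i$, whence $\left(\mathcal{A}\e_i^{m-1}\right)_i=a_{ii\cdots i}$. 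Semi-positivity then gives $a_{ii\cdots i}\geq0$, proving (i), and strict semi-positivity gives the strict inequality $a_{ii\cdots i}>0$, proving (ii). Since $i$ is arbitrary, both conclusions hold for all $i\in I_n$.

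For parts (iii) and (iv), apply Definition \ref{d21} to $\x=\1$, which again satisfies $\1\geq\0$ and $\1\neq\0$; here every coordinate is positive, so the requirement $x_k>0$ is automatic for every index. Because each factor $\1_{i_j}$ equals $1$, evaluating the $k$th coordinate yields $\left(\mathcal{A}\1^{m-1}\right)_k=\sum_{i_2,\cdots,i_m=1}^n a_{k i_2\cdots i_m}$. The index $k$ produced by the definition therefore satisfies $\sum_{i_2,\cdots,i_m=1}^n a_{k i_2\cdots i_m}\geq0$ in the semi-positive case and $>0$ in the strictly semi-positive case, which are precisely (iii) and (iv).

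In contrast to the deeper equivalences proved later in the paper, I expect no genuine obstacle here: each assertion reduces to the observation that a suitable nonnegative test vector annihilates all but one (or none) of the terms in $\mathcal{A}\x^{m-1}$. The only point requiring a small amount of care is the support argument that pins down $k=i$ in parts (i) and (ii); this follows at once from the fact that $\e_i$ has a single nonzero coordinate, and no such forcing is needed for (iii) and (iv) since all coordinates of $\1$ are positive.
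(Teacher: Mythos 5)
Your proof is correct and takes essentially the same approach as the paper, which likewise obtains (i) and (ii) by testing Definition \ref{d21} on the standard basis vectors and (iii) and (iv) by testing on $\e=(1,1,\cdots,1)^\top$. You merely spell out the details the paper leaves implicit, in particular the support argument forcing the witness index $k$ to equal $i$ in parts (i) and (ii).
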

\begin{proof}
	It follows from Definition \ref{d21} that we can obtain (i) and (ii) by taking $$\x^{(i)}=(0,\cdots,1,\cdots,0)^\top,\ i\in I_n$$
	where $1$ is the ith component $x_i$.
	Similarly, choose $\x=\e=(1,1,\cdots,1)^\top$, then we obtain (iii) and (vi) by Definition \ref{d21}.
\end{proof}

\begin{Definition} \label{d22}\em
	Let $\mathcal{A}  = (a_{i_1\cdots i_m}) \in T_{m, n}$.   $\mathcal{A}$ is said to be\begin{itemize}
		\item[(i)] {\bf copositive } if $\mathcal{A}\x^m\geq0$ for all $\x\in \mathbb{R}^n_+$;
		\item[(ii)] {\bf strictly copositive} if  $\mathcal{A}\x^m>0$ for all $\x\in \mathbb{R}^n_+\setminus\{\0\}$.
	\end{itemize}
\end{Definition}

The concept of (strictly) copositive tensors was first introduced and used by Qi in \cite{Qi1}.  Song and Qi \cite{SQ15} showed their equivalent definition and some special structures. The following lemma is one of the structure conclusions  of (strictly) copositive tensors in \cite{SQ15}.

\begin{Lemma}\em { (\cite[Proposition 3.1]{SQ15})}\label{le:22}
 Let $\mathcal{A}$ be a symmetric tensor of order $m$ and dimension $n$. Then
\begin{itemize}
\item[(i)] $\mathcal{A}$ is copositive if and only if  $\mathcal{A}x^m\geq0$ for all $x\in \mathbb{R}^n_+$ with $\|x\|=1$;
\item[(ii)] $\mathcal{A}$ is strictly copositive if and only if $\mathcal{A}x^m>0$ for all $x\in \mathbb{R}^n_+$ with $\|x\|=1$.\end{itemize}
\end{Lemma}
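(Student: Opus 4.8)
The plan is to exploit the positive homogeneity of the degree-$m$ form $\mathcal{A}x^m$, which reduces testing (strict) nonnegativity over the whole nonnegative orthant to testing it over the unit-sphere slice $\{x \in \mathbb{R}^n_+ : \|x\| = 1\}$. In both (i) and (ii) the forward implication is immediate: the slice is contained in $\mathbb{R}^n_+$ (respectively in $\mathbb{R}^n_+ \setminus \{0\}$), so any inequality assumed to hold on all of $\mathbb{R}^n_+$ holds in particular at every $x$ with $\|x\| = 1$. Hence the content is entirely in the reverse implications.

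For the reverse direction I would first record the scaling identity
$$\mathcal{A}(tx)^m = t^m\, \mathcal{A}x^m \quad \text{for all } t \geq 0 \text{ and } x \in \mathbb{R}^n,$$
which follows directly from the definition of $\mathcal{A}x^m$ as a homogeneous polynomial of degree $m$ in the entries of $x$. Given an arbitrary nonzero $x \in \mathbb{R}^n_+$, set $t = \|x\| > 0$ and $y = x/\|x\|$, so that $y \in \mathbb{R}^n_+$, $\|y\| = 1$, and $x = t y$; therefore $\mathcal{A}x^m = t^m\, \mathcal{A}y^m$. For (i) the hypothesis gives $\mathcal{A}y^m \geq 0$, and since $t^m > 0$ we obtain $\mathcal{A}x^m \geq 0$, while the remaining case $x = 0$ is trivial because $\mathcal{A}0^m = 0$; thus $\mathcal{A}$ is copositive. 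For (ii) the hypothesis gives $\mathcal{A}y^m > 0$, and multiplying by the strictly positive factor $t^m$ preserves the strict inequality, so $\mathcal{A}x^m > 0$ for every $x \in \mathbb{R}^n_+ \setminus \{0\}$; thus $\mathcal{A}$ is strictly copositive. This settles both equivalences.

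I expect no substantive obstacle here: the argument is pure homogeneity, and in fact the symmetry hypothesis on $\mathcal{A}$ is never used. The only point requiring a little care is the strict case, where one must observe that the rescaling factor $t^m = \|x\|^m$ is strictly positive and so cannot collapse a strict inequality, and that the origin—excluded from the domain in the definition of strict copositivity—is precisely the point at which $\mathcal{A}x^m$ vanishes. It is worth noting that, although this equivalence needs only homogeneity, the reformulation is useful in subsequent arguments because it exhibits strict copositivity as positivity over a \emph{compact} set, allowing one later to extract a positive lower bound for $\mathcal{A}x^m$ on $\{x \in \mathbb{R}^n_+ : \|x\| = 1\}$.
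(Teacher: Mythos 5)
Your proposal is correct: the forward directions are trivial restrictions, and the reverse directions follow from the degree-$m$ homogeneity identity $\mathcal{A}(tx)^m = t^m\mathcal{A}x^m$ with $t=\|x\|>0$, exactly as you argue. The paper itself gives no proof of this lemma---it is quoted from \cite{SQ15}---and the proof there is the same rescaling argument, so your route matches the source; your side remarks (symmetry of $\mathcal{A}$ is not actually used, and the value of the reformulation is compactness of the unit-sphere slice, which is precisely how the paper exploits it in proving Theorem \ref{th:33}) are both accurate.
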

\begin{Definition} \label{d23}\em Let $\mathcal{A}  = (a_{i_1\cdots i_m}) \in T_{m, n}$. In homogeneous polynomial $\mathcal{A}\x^m$, if we let some (but not all) $x_i$ be zero, then we have a less variable homogeneous polynomial, which defines a lower dimensional tensor. We call such a lower dimensional tensor a {\bf principal sub-tensor} of $\mathcal{A}$, i.e.,  an $m$-order $r$-dimensional principal sub-tensor $\mathcal{B}$  of an $m$-order $n$-dimensional tensor $\mathcal{A}$ consists of $r^m$ entries in $\mathcal{A} = (a_{i_1\cdots i_m})$: for any set $\mathcal{N}$ that composed of $r$ elements in $ \{1,2,\cdots , n\}$,
$$\mathcal{B} = (a_{i_1\cdots i_m}),\mbox{ for all } i_1, i_2, \cdots, i_m\in \mathcal{N}.$$
\end{Definition}
The concept were first introduced and used by Qi \cite{Qi} to the higher order symmetric tensor.
It follows from Definition \ref{d21} that the following Proposition is  obvious.

\begin{Proposition} \em Let $\mathcal{A}  = (a_{i_1\cdots i_m}) \in T_{m, n}$. Then
\begin{itemize}
\item[(i)] each principal sub-tensor of a semi-positive tensor is  semi-positive;
\item[(ii)] each principal sub-tensor of a strictly semi-positive tensor is strictly semi-positive.
\end{itemize}
\end{Proposition}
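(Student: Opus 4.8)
The plan is to prove both parts in one stroke, since the only difference between semi-positive and strictly semi-positive is whether the relevant component of $\mathcal{A}\x^{m-1}$ is required to be nonnegative or strictly positive. I would fix a principal sub-tensor $\mathcal{B}$ of $\mathcal{A}$, associated with an index set $\mathcal{N} \subseteq I_n$ of cardinality $r$, so that by Definition \ref{d23} the entries of $\mathcal{B}$ are exactly the $a_{i_1 \cdots i_m}$ with all $i_1, \ldots, i_m \in \mathcal{N}$. Throughout I would regard vectors in $\mathbb{R}^r$ as being indexed by $\mathcal{N}$.

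The core device is a \emph{zero extension}. Given an arbitrary $\y \in \mathbb{R}^r_+$ with $\y \neq \0$, define $\x \in \mathbb{R}^n$ by $x_i = y_i$ for $i \in \mathcal{N}$ and $x_i = 0$ for $i \in I_n \setminus \mathcal{N}$. Then $\x \geq \0$ and $\x \neq \0$, so the (strict) semi-positivity of $\mathcal{A}$ supplies an index $k \in I_n$ with $x_k > 0$ and $(\mathcal{A}\x^{m-1})_k \geq 0$ (respectively $> 0$). Because $x_i = 0$ off $\mathcal{N}$, the condition $x_k > 0$ forces $k \in \mathcal{N}$, so the index produced actually belongs to the sub-tensor's index set.

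The key step, and really the only thing that needs checking, is the identity $(\mathcal{A}\x^{m-1})_k = (\mathcal{B}\y^{m-1})_k$ for $k \in \mathcal{N}$. Writing $(\mathcal{A}\x^{m-1})_k = \sum_{i_2, \ldots, i_m = 1}^n a_{k i_2 \cdots i_m} x_{i_2} \cdots x_{i_m}$, every summand in which some $i_j \notin \mathcal{N}$ carries a factor $x_{i_j} = 0$ and hence vanishes; the surviving terms, with all indices in $\mathcal{N}$, are precisely the defining sum for $(\mathcal{B}\y^{m-1})_k$. Combining this with the previous paragraph yields $y_k = x_k > 0$ together with $(\mathcal{B}\y^{m-1})_k = (\mathcal{A}\x^{m-1})_k \geq 0$ (respectively $> 0$), which is exactly the condition of Definition \ref{d21} applied to the test vector $\y$. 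Since $\y$ was arbitrary, $\mathcal{B}$ is (strictly) semi-positive, proving (i) and (ii) simultaneously.

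I do not expect any genuine obstacle here: the statement falls out directly from unwinding Definition \ref{d21} against the structure of a principal sub-tensor in Definition \ref{d23}. The one point meriting a moment's care is the coordinate identity above, namely verifying that restricting $\mathcal{A}$ to the sub-tensor $\mathcal{B}$ coincides with evaluating $\mathcal{A}$ on the zero-extended vector $\x$; once that bookkeeping is confirmed, the rest is immediate.
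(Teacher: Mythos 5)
Your proof is correct and is precisely the argument the paper leaves implicit: the paper states the Proposition with no written proof, declaring it ``obvious'' from Definition \ref{d21}, and the intended justification is exactly your zero-extension device together with the coordinate identity $(\mathcal{A}\x^{m-1})_k = (\mathcal{B}\y^{m-1})_k$ for $k \in \mathcal{N}$. You have simply written out the bookkeeping the authors omitted, including the one point worth noting (that $x_k>0$ forces $k\in\mathcal{N}$), so there is nothing to correct.
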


  Let $N\subset I_n=\{1, 2,  \cdots, n\}$.  We denote the principal sub-tensor of   $\mathcal{A}$ by $\mathcal{A}^{|N|}$, where $|N|$ is  the cardinality of $N$.  So, $\mathcal{A}^{|N|}$ is a tensor of order $m$ and dimension $|N|$ and the principal sub-tensor $\mathcal{A}^{|N|}$  is just $\mathcal{A}$ itself when $N=I_n=\{1, 2,  \cdots, n\}$.

\section{Main results}
%\hspace{4mm}

In this section, we will prove that   a real tensor $\mathcal{A}$ is a (strictly) semi-positive tensor if and only if the tensor complementarity problem $(\q, \mathcal{A})$ has a unique solution for $\q>\0$ ($\q\geq\0$).
\begin{Theorem}\label{th:31}\em Let $\mathcal{A}  = (a_{i_1\cdots i_m}) \in T_{m, n}$. The following statements are equivalent:
\begin{itemize}
  \item[(i)]  $\mathcal{A}$ is semi-positive.
  \item[(ii)] The TCP $(\q, \mathcal{A})$ has a unique solution for every $\q > \0$.
  \item[(iii)] For every index set $N\subset I_n$, the system
\begin{equation}\label{eq:32} \mathcal{A}^{|N|}(\x^N)^{m-1}<\0, \ \x^N\geq \0 \end{equation}
has no solution, where $\x^N\in \mathbb{R}^{|N|}.$
\end{itemize}
 \end{Theorem}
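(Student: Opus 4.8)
The plan is to prove the chain of implications (i) $\Rightarrow$ (iii) $\Rightarrow$ (ii) $\Rightarrow$ (i), so that all three statements become equivalent. The guiding observation is that the defining property of a semi-positive tensor is really a statement about the \emph{support} of a test vector, and that restricting a nonnegative vector to its support turns $\mathcal{A}\x^{m-1}$ into the action of a principal sub-tensor. I would also record at the outset the elementary but crucial fact that $\x = \0$ always solves the TCP $(\q, \mathcal{A})$ when $\q > \0$; hence ``unique solution'' in (ii) is equivalent to ``$\0$ is the only solution'', i.e. the TCP has no nonzero solution.

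For (i) $\Rightarrow$ (iii), I would argue by contraposition. If \reff{eq:32} has a solution $\x^N \geq \0$ for some $N$, then $\x^N \neq \0$ because the left-hand side is strictly negative, and by the Proposition stating that every principal sub-tensor of a semi-positive tensor is semi-positive, $\mathcal{A}^{|N|}$ is itself semi-positive. Applying the definition of semi-positivity to $\x^N$ produces an index $k$ with $x^N_k > 0$ and $(\mathcal{A}^{|N|}(\x^N)^{m-1})_k \geq 0$, directly contradicting $\mathcal{A}^{|N|}(\x^N)^{m-1} < \0$. Thus no such $N$ exists, and (iii) holds.

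For (iii) $\Rightarrow$ (ii), fix $\q > \0$ and suppose, besides $\0$, some nonzero $\x$ solves the TCP. Let $N = \{ i : x_i > 0\}$ be its support. Componentwise complementarity (each product $x_i(\q + \mathcal{A}\x^{m-1})_i$ vanishes, since both factors are nonnegative and the sum is zero) forces $(\mathcal{A}\x^{m-1})_i = -q_i < 0$ for every $i \in N$. Since the entries of $\x$ outside $N$ vanish, $(\mathcal{A}\x^{m-1})_i = (\mathcal{A}^{|N|}(\x^N)^{m-1})_i$ for $i \in N$, so $\x^N > \0$ solves $\mathcal{A}^{|N|}(\x^N)^{m-1} < \0$, contradicting (iii). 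Hence $\0$ is the unique solution.

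The main work is in (ii) $\Rightarrow$ (i), which I would prove by contraposition via an explicit construction of $\q$. If $\mathcal{A}$ is not semi-positive, there is $\x \geq \0$, $\x \neq \0$, with $(\mathcal{A}\x^{m-1})_k < 0$ for every $k$ in the support $S = \{k : x_k > 0\}$. I would then \emph{design} $\q$ to make this very $\x$ a nonzero TCP solution: set $q_k = -(\mathcal{A}\x^{m-1})_k > 0$ for $k \in S$ so that $(\q + \mathcal{A}\x^{m-1})_k = 0$ there, and set $q_k = 1 + |(\mathcal{A}\x^{m-1})_k| > 0$ for $k \notin S$ so that the $k$-th residual stays $\geq 1 > 0$. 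This yields $\q > \0$, feasibility $\q + \mathcal{A}\x^{m-1} \geq \0$, and complementarity $\x^\top(\q + \mathcal{A}\x^{m-1}) = 0$ (the residual vanishes on $S$, and $x_k = 0$ off $S$). Thus the TCP $(\q, \mathcal{A})$ has the two distinct solutions $\0$ and $\x$, contradicting (ii). The delicate point throughout is bookkeeping the strict versus non-strict inequalities — ensuring $\q$ is \emph{strictly} positive in every coordinate while the residual is exactly zero on the support and strictly positive off it — and correctly identifying the support so that the passage to a principal sub-tensor is legitimate; the construction of $\q$ is the only genuinely non-mechanical step.
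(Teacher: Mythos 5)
Your proof is correct, and it uses the same two essential devices as the paper — the support-restriction identity $(\mathcal{A}^{|N|}(\x^N)^{m-1})_k = (\mathcal{A}\x^{m-1})_k$ for $k$ in the support, and the explicit construction of $\q$ with $q_k = -(\mathcal{A}\x^{m-1})_k$ on the support and anything sufficiently large off it — but arranged in the opposite cyclic order: you prove (i) $\Rightarrow$ (iii) $\Rightarrow$ (ii) $\Rightarrow$ (i), whereas the paper proves (i) $\Rightarrow$ (ii) $\Rightarrow$ (iii) $\Rightarrow$ (i). Concretely, your (ii) $\Rightarrow$ (i) is the paper's (ii) $\Rightarrow$ (iii) construction applied to a witness of non-semi-positivity rather than to a solution of \reff{eq:32}; these are the same object, since a violation of semi-positivity, restricted to its support, is precisely a solution of \reff{eq:32}. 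Your (i) $\Rightarrow$ (iii) invokes the hereditary property (every principal sub-tensor of a semi-positive tensor is semi-positive, the paper's Proposition 2.4), which the paper's proof never needs — its (iii) $\Rightarrow$ (i) goes directly through the support identity without citing that proposition. Your (iii) $\Rightarrow$ (ii) reads the strict inequalities off componentwise complementarity (each term $x_i(\q+\mathcal{A}\x^{m-1})_i$ vanishes, so $(\mathcal{A}\x^{m-1})_i=-q_i<0$ on the support), which is the converse reading of the paper's engineered $\q$; this makes the uniqueness step arguably more transparent, at the modest cost of one extra cited lemma. All strict-versus-weak inequality bookkeeping in your construction checks out: $\q$ is strictly positive in every coordinate, the residual is exactly zero on the support and at least $1$ off it, so both feasibility and complementarity hold and $\x\neq\0$ indeed contradicts uniqueness.
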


 \begin{proof} (i) $\Rightarrow$ (ii). Since $\q > \0$, it is obvious that $\0$ is a  solution of TCP $(\q, \mathcal{A})$. Suppose that there exists a vector $\q'>\0$ such that TCP $(\q', \mathcal{A})$ has non-zero vector solution $\x.$  Since $\mathcal{A}$ is semi-positive, there is an index $k\in I_n$ such that $$x_k>0\mbox{ and } \left(\mathcal{A}\x^{m-1}\right)_k\geq 0.$$
Then $q'_k+\left(\mathcal{A}\x^{m-1}\right)_k>0$,  and so $$\x^\top (\q' + \mathcal{A}\x^{m-1})>0.$$
This   contradicts the assumption that $\x$ solves TCP $(\q', \mathcal{A})$. So the TCP $(\q, \mathcal{A})$ has a unique solution $\0$ for every $\q > \0$.\\

(ii) $\Rightarrow$ (iii).  Suppose that  there is an index set $N$ such that the system (\ref{eq:32}) has a solution $\bar{\x}^{N}$. Clearly, $\bar{\x}^N\ne\0$.  Let $\bar{\x}=(\bar{x}_1, \bar{x}_2,\cdots,\bar{x}_n)^\top $ with
$$\bar{x}_i=\begin{cases}\bar{\x}^N_i, i\in N\\
0,\ \ i\in I_n\setminus N.
\end{cases}$$
Choose $\q=(q_1, q_2,\cdots,q_n)^\top $ with $$\begin{cases} q_i =-\left(\mathcal{A}^{|N|}(\bar{\x}^{N})^{m-1}\right)_i=-\left(\mathcal{A}\bar{\x}^{m-1}\right)_i, i\in N\\
q_i >\max\{0, -\left(\mathcal{A}\bar{\x}^{m-1}\right)_i\}\ \ i\in I_n\setminus N.
\end{cases}$$
So, $\q>\0$ and $\bar{\x}\ne \0$.  Then $\bar{\x}$ solves the TCP $(\q, \mathcal{A}).$ This contradicts (ii).\\

(iii) $\Rightarrow$ (i). For each $\x\in\mathbb{R}^n_+$ and $\x\ne \0$, we may assume that $\x=(x_1,x_2,\cdots,x_n)^\top$ with for some $N\subset I_n$, $$\begin{cases}x_i>0, i\in N\\
x_i=0,i\in I_n\setminus N.
\end{cases}$$ Since the system (\ref{eq:32}) has no solution,  there exists an index $k\in N\subset I_n$ such that $$x_k>0\mbox{ and } \left(\mathcal{A}\x^{m-1}\right)_k\geq0,$$ and hence $\mathcal{A}$ is semi-positive.
\end{proof}
Using the same proof as that of Theorem \ref{th:31} with appropriate changes in the
inequalities. We can obtain the following conclusions about the strictly semi-positive tensor.

\begin{Theorem}\label{th:32}\em Let $\mathcal{A}  = (a_{i_1\cdots i_m}) \in T_{m, n}$. The following statements are equivalent:
	\begin{itemize}
		\item[(i)]  $\mathcal{A}$ is strictly semi-positive.
		\item[(ii)] The TCP $(\q, \mathcal{A})$ has a unique solution for every $\q \geq \0$.
		\item[(iii)] For every index set $N\subset I_n$, the system
		\begin{equation}\label{eq:33} \mathcal{A}^{|N|}(\x^N)^{m-1}\leq\0, \ \x^N\geq \0,\ \x^N\ne\0 \end{equation}
		has no solution.
	\end{itemize}
\end{Theorem}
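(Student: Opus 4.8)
The plan is to follow exactly the three-implication cycle $(i) \Rightarrow (ii) \Rightarrow (iii) \Rightarrow (i)$ used for Theorem \ref{th:31}, replacing each weak inequality by its strict counterpart (and vice versa) wherever strict semi-positivity forces the change. The whole argument is parallel to the semi-positive case; the only places demanding genuine care are where the sign conditions differ between the two theorems.

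For $(i) \Rightarrow (ii)$, since $\q \geq \0$ the vector $\0$ again solves TCP $(\q,\mathcal{A})$. To get uniqueness I would suppose some $\q' \geq \0$ admits a nonzero solution $\x$ and invoke strict semi-positivity to produce an index $k$ with $x_k > 0$ and $\left(\mathcal{A}\x^{m-1}\right)_k > 0$. Because $q_k' \geq 0$, the $k$-th complementarity term $x_k\left(q_k' + \left(\mathcal{A}\x^{m-1}\right)_k\right)$ is strictly positive, while feasibility makes every term nonnegative, so $\x^\top(\q' + \mathcal{A}\x^{m-1}) > 0$, contradicting complementarity. The contrast with Theorem \ref{th:31} is instructive: there one needed $\q > \0$ so that $q_k' + \left(\mathcal{A}\x^{m-1}\right)_k > 0$ even when the tensor term was only $\geq 0$; here the strict inequality is supplied by the tensor term itself, so $\q \geq \0$ already suffices.

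For $(ii) \Rightarrow (iii)$, assuming the system (\ref{eq:33}) has a solution $\bar{\x}^N$ with $\bar{\x}^N \geq \0$, $\bar{\x}^N \ne \0$ and $\mathcal{A}^{|N|}(\bar{\x}^N)^{m-1} \leq \0$, I would zero-pad it to $\bar{\x} \in \mathbb{R}^n$ and set $q_i = -\left(\mathcal{A}\bar{\x}^{m-1}\right)_i$ for $i \in N$ and $q_i > \max\{0,\, -\left(\mathcal{A}\bar{\x}^{m-1}\right)_i\}$ for $i \in I_n\setminus N$. The decisive point is the sign relaxation: since $\mathcal{A}^{|N|}(\bar{\x}^N)^{m-1} \leq \0$, each $q_i$ with $i \in N$ is $\geq 0$ (possibly exactly zero), hence $\q \geq \0$ rather than $\q > \0$, which is precisely the hypothesis in $(ii)$. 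One then checks, using $\left(\mathcal{A}\bar{\x}^{m-1}\right)_i = \left(\mathcal{A}^{|N|}(\bar{\x}^N)^{m-1}\right)_i$ for $i \in N$ (the off-$N$ coordinates of $\bar{\x}$ vanish), that $\bar{\x} \ne \0$ is a second solution of TCP $(\q,\mathcal{A})$, contradicting uniqueness.

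For $(iii) \Rightarrow (i)$, given $\x \geq \0$, $\x \ne \0$, I take $N = \{i \in I_n : x_i > 0\}$ so that $\x^N \geq \0$, $\x^N \ne \0$; since (\ref{eq:33}) has no solution, $\mathcal{A}^{|N|}(\x^N)^{m-1} \leq \0$ must fail, yielding $k \in N$ with $\left(\mathcal{A}\x^{m-1}\right)_k > 0$ and $x_k > 0$, i.e. strict semi-positivity. I do not expect a serious obstacle, since the proof is a faithful transcription of Theorem \ref{th:31}; the single subtlety worth flagging is keeping the bookkeeping of strict versus weak inequalities consistent, and in particular verifying that the relaxation to $\q \geq \0$ in $(ii) \Rightarrow (iii)$ is both valid and necessary, as that is the one genuine difference from the semi-positive case.
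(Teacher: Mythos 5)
Your proposal is correct and is exactly what the paper intends: the paper gives no separate proof for Theorem~\ref{th:32}, stating only that one repeats the proof of Theorem~\ref{th:31} ``with appropriate changes in the inequalities,'' and your write-up is precisely that adaptation, carried out correctly. In particular you handle the two genuine sign changes properly --- in $(i)\Rightarrow(ii)$ the strict positivity now comes from $\left(\mathcal{A}\x^{m-1}\right)_k>0$ rather than from $q_k'>0$, and in $(ii)\Rightarrow(iii)$ the constructed $\q$ is only guaranteed nonnegative (since $\mathcal{A}^{|N|}(\bar{\x}^N)^{m-1}\leq\0$ may have zero components), which is exactly why $(ii)$ is stated for $\q\geq\0$.
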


Now we give the following main results by means of the concept of principal sub-tensor.
\begin{Theorem}\label{th:33}\em Let $\mathcal{A}$ be a symmetric tensor of order $m$ and dimension $n$.  Then $\mathcal{A}$ is semi-positive if and only if  it is copositive.
\end{Theorem}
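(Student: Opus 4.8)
The statement is an equivalence, so the plan is to prove the two implications separately; I expect the direction \textbf{copositive} $\Rightarrow$ \textbf{semi-positive} to be routine, while \textbf{semi-positive} $\Rightarrow$ \textbf{copositive} is the substantive half and the place where symmetry is genuinely used.

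For the easy implication I would argue by contraposition, and I note this half does not even need symmetry. Suppose $\mathcal{A}$ is not semi-positive. Then there is some $\x \geq \0$ with $\x \neq \0$ such that $\left(\mathcal{A}\x^{m-1}\right)_k < 0$ for every index $k$ with $x_k > 0$. Using the identity $\mathcal{A}\x^m = \x^\top\!\left(\mathcal{A}\x^{m-1}\right) = \sum_{k} x_k \left(\mathcal{A}\x^{m-1}\right)_k$, only the indices with $x_k > 0$ contribute; each such term is strictly negative and at least one is present, so $\mathcal{A}\x^m < 0$ and $\mathcal{A}$ fails to be copositive.

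For the converse, assume $\mathcal{A}$ is semi-positive and suppose, for contradiction, that it is not copositive. By Lemma \ref{le:22}, copositivity is equivalent to nonnegativity of $\mathcal{A}\x^m$ on the compact set $S = \{\x \in \mathbb{R}^n_+ : \|\x\| = 1\}$; since $\mathcal{A}\x^m$ is continuous and $S$ is compact, the minimum is attained at some $\bar{\x} \in S$, and under our assumption $\mathcal{A}\bar{\x}^m < 0$. Let $N = \{i \in I_n : \bar{x}_i > 0\}$, which is nonempty. The crucial use of symmetry is the gradient identity $\nabla(\mathcal{A}\x^m) = m\,\mathcal{A}\x^{m-1}$. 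To avoid dealing with the nonnegativity multipliers directly, I would pass to the principal sub-tensor $\mathcal{B} = \mathcal{A}^{|N|}$: the restriction $\bar{\x}^N$ is strictly positive and minimizes $\mathcal{B}\y^m$ over $\{\y \geq \0 : \|\y\| = 1\}$, hence lies in the relative interior of that feasible set. The equality-constrained stationarity condition then reads $m\,\mathcal{B}(\bar{\x}^N)^{m-1} = 2\lambda\,\bar{\x}^N$ for a single Lagrange multiplier $\lambda$. Taking the inner product with $\bar{\x}^N$ and using $\|\bar{\x}^N\| = 1$ gives $2\lambda = m\,\mathcal{B}(\bar{\x}^N)^m = m\,\mathcal{A}\bar{\x}^m < 0$, so $\lambda < 0$. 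Consequently $\left(\mathcal{B}(\bar{\x}^N)^{m-1}\right)_i = \tfrac{2\lambda}{m}\,\bar{x}_i < 0$ for every $i \in N$, and since $\bar{x}_j = 0$ for $j \notin N$ the sub-tensor component coincides with the full one, i.e. $\left(\mathcal{A}\bar{\x}^{m-1}\right)_i < 0$ for all $i \in N$. This contradicts semi-positivity, which requires some $k$ with $\bar{x}_k > 0$ and $\left(\mathcal{A}\bar{\x}^{m-1}\right)_k \geq 0$; hence $\mathcal{A}$ is copositive.

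The main obstacle is precisely this converse direction, and within it the rigorous first-order analysis at the constrained minimizer. Symmetry enters exactly through the gradient formula: without it, $\nabla(\mathcal{A}\x^m)$ would involve the symmetrization of $\mathcal{A}$ rather than $\mathcal{A}\x^{m-1}$ itself, and the contradiction with semi-positivity would collapse. Restricting to the principal sub-tensor on the support of $\bar{\x}$ is the clean device that removes the active nonnegativity constraints from the stationarity condition, leaving only the spherical equality constraint and a single multiplier whose sign is pinned down by the negativity of the minimum value.
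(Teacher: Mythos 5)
Your proof is correct and takes essentially the same approach as the paper: the easy direction is the same componentwise computation $\mathcal{A}\x^m=\sum_k x_k(\mathcal{A}\x^{m-1})_k$, and for the converse both arguments minimize $\mathcal{A}\x^m$ over a compact cross-section of $\mathbb{R}^n_+$, pass to the principal sub-tensor on the (strictly positive) support of the minimizer so that only the single equality constraint remains active, and combine the Lagrange stationarity condition with semi-positivity to fix the sign of the multiplier. The only cosmetic differences are your sphere normalization $\|\x\|=1$ (versus the paper's simplex $\sum_i x_i =1$, which makes $\mathcal{A}\tilde{\y}^{m-1}$ constant on the support rather than proportional to the minimizer) and your argument by contradiction where the paper directly shows the minimum value $\lambda\geq 0$.
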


\begin{proof} If $\mathcal{A}$ is copositive, then 	\begin{equation}\label{eq:34}\mathcal{A}\x^m=\x^\top \mathcal{A}\x^{m-1}\geq0\mbox{ for all }\x\in \mathbb{R}^n_+.\end{equation}
So $\mathcal{A}$ must be semi-positive.
In fact, suppose not. Then for all $k\in I_n $ such that
	$$\left(\mathcal{A}\x^{m-1}\right)_k<0 \mbox{ when } x_k>0.$$ Then we have $$\mathcal{A}\x^m=\x^\top \mathcal{A}\x^{m-1}=\sum_{k=1}^{n}x_k\left(\mathcal{A}\x^{m-1}\right)_k<0,$$
	 which contradicts (\ref{eq:34}).\\
	
Now we show the necessity. Let $$ S=\{\x\in\mathbb{R}^{n}_+;\sum\limits_{i=1}^{n}x_i=1\}\mbox{ and }F(\x)=\mathcal{A}\x^m=\x^\top\mathcal{A}\x^{m-1}.$$
  Obviously, $F:S\to \mathbb{R}$  is continuous on the set $S$.  Then there exists $\tilde{\y}\in S$ such that \begin{equation}\label{eq:36}
\mathcal{A}\tilde{\y}^m=\tilde{\y}^\top \mathcal{A}\tilde{\y}^{m-1}=F(\tilde{\y})=\min_{\x\in S}F(\x)=\min_{\x\in S}\x^\top\mathcal{A}\x^{m-1}=\min_{\x\in S}\mathcal{A}\x^m.
\end{equation}
Since $\tilde{\y}\geq0$ with $\tilde{\y}\neq0$, we may assume that
$$\tilde{\y}=(\tilde{y}_1,\tilde{y}_2,\cdots,\tilde{y}_l,0,\cdots,0)^T\ (\tilde{y}_i>0\mbox{ for }i=1,\cdots,l, 1\leq l\leq n). $$
Let $\tilde{\w}=(\tilde{y}_1,\tilde{y}_2,\cdots,\tilde{y}_l)^T$ and let $\mathcal{B}$ be a principal sub-tensor that obtained from $\mathcal{A}$ by the polynomial $\mathcal{A}x^m$ for $\x=(x_1,x_2,\cdots,x_l,0,\cdots,0)^T$. Then \begin{equation} \label{eq:25}
\tilde{\w}\in\mathbb{R}^l_{++},\  \sum\limits_{i=1}^l\tilde{y}_i = 1\mbox{ and }F(\tilde{\y})=\mathcal{A}\tilde{\y}^m=\mathcal{B}\tilde{\w}^m=\min_{\x\in S}\mathcal{A}\x^m.
\end{equation}

Let $\x=(z_1,z_2,\cdots,z_l,0,\cdots,0)^T\in\mathbb{R}^n$ for all $\z=(z_1,z_2,\cdots,z_l)^T\in\mathbb{R}^l$ with $\sum\limits_{i=1}^lz_i = 1$. Clearly, $\x\in S$, and hence, by  (\ref{eq:25}), we have $$F(x)=\mathcal{A}\x^m=\mathcal{B}\z^m\geq F(\tilde{\y})=\mathcal{A}\tilde{\y}^m=\mathcal{B}\tilde{\w}^m.$$ Since $\tilde{\w}\in\mathbb{R}^l_{++}$,
$\tilde{\w}$ is a local minimizer of the following optimization problem
$$ \begin{aligned}
    \min_{\z\in \mathbb{R}^l} &\ \mathcal{B}\z^m\\
     s.t. &\ \sum\limits_{i=1}^lz_i = 1.
      \end{aligned}$$
So, the standard KKT conditions
implies that there exists $\mu\in \mathbb{R}$ such that
$$\nabla(\mathcal{B}\z^m-\mu(\sum\limits_{i=1}^lz_i -1))|_{\z=\tilde{\w}}= m\mathcal{B}\tilde{\w}^{m-1}-\mu \e=0,$$
where $\e=(1,1,\cdots,1)^\top$, and hence $$\mathcal{B}\tilde{\w}^{m-1}=\frac{\mu}m \e.$$

Let $\lambda=\frac{\mu}m$.  Then $$\mathcal{B}\tilde{\w}^{m-1}=(\lambda,\lambda,\cdots,\lambda)^\top\in \mathbb{R}^l,$$ and so $$\mathcal{B}\tilde{\w}^m=\tilde{\w}^\top\mathcal{B}\tilde{\w}^{m-1}=\lambda\sum\limits_{i=1}^l\tilde{y}_i =\lambda.$$It follows from (\ref{eq:25}) that $$\mathcal{A}\tilde{\y}^m=\tilde{\y}^\top  \mathcal{A}\tilde{\y}^{m-1}=\mathcal{B}\tilde{\w}^m=\min_{\x\in S}\mathcal{A}\x^m =\lambda.$$
Thus, for all $\tilde{y}_k>0$, we have
$$
\left(\mathcal{A}\tilde{\y}^{m-1}\right)_k=\left(\mathcal{B}\tilde{\w}^{m-1}\right)_k=\lambda.$$

Since $\mathcal{A}$ is semi-positive,  for $\tilde{\y}\geq\0$ and $\tilde{\y}\ne\0$,  there exists an index $k\in I_n$ such that $$\tilde{\y}_k>0\mbox{ and }\left( \mathcal{A}\tilde{\y}^{m-1}\right)_k\geq0. $$
and hence,  $\lambda\geq0$. Consequently, we have
$$\min_{\x\in S}\mathcal{A}\x^m=\mathcal{A}\tilde{\y}^m=\lambda\geq0.$$
It follows from Lemma \ref{le:22} that $\mathcal{A}$ is copositive.
  The theorem is proved.
 \end{proof}

Using the same proof as that of Theorem \ref{th:33} with appropriate changes in the
inequalities, we can obtain the following conclusions about the strictly copositive tensor.

 \begin{Theorem}\label{th:34}\em Let $\mathcal{A}  = (a_{i_1\cdots i_m}) \in S_{m,n}$. Then $\mathcal{A}$ is strictly semi-positive if and only if  it is strictly copositive.
\end{Theorem}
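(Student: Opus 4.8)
The plan is to mirror the proof of Theorem \ref{th:33} line by line, replacing each weak inequality by its strict counterpart. For the direction that strict copositivity implies strict semi-positivity, I would argue by contradiction. If $\mathcal{A}$ were strictly copositive but not strictly semi-positive, there would exist $\x \in \mathbb{R}^n_+$ with $\x \neq \0$ such that $\left(\mathcal{A}\x^{m-1}\right)_k \leq 0$ for every $k$ with $x_k > 0$. Weighting by the nonnegative $x_k$ then gives
\[
\mathcal{A}\x^m = \sum_{k=1}^n x_k \left(\mathcal{A}\x^{m-1}\right)_k \leq 0,
\]
contradicting $\mathcal{A}\x^m > 0$ on $\mathbb{R}^n_+ \setminus \{\0\}$. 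The only adjustment relative to Theorem \ref{th:33} is that the hypothesis there, $\left(\mathcal{A}\x^{m-1}\right)_k < 0$, is relaxed to $\leq 0$, which already suffices to contradict strict copositivity.

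For the converse I would reuse the compactness-and-KKT argument essentially verbatim. Setting $S = \{\x \in \mathbb{R}^n_+; \sum_{i=1}^n x_i = 1\}$ and $F(\x) = \mathcal{A}\x^m$, continuity of $F$ on the compact set $S$ yields a minimizer $\tilde{\y} \in S$ whose positive entries I collect into $\tilde{\w} \in \mathbb{R}^l_{++}$, with associated principal sub-tensor $\mathcal{B}$. The KKT conditions for minimizing $\mathcal{B}\z^m$ subject to $\sum_{i=1}^l z_i = 1$ force $\mathcal{B}\tilde{\w}^{m-1} = \lambda\e$, so that $\min_{\x \in S}\mathcal{A}\x^m = \lambda$ and $\left(\mathcal{A}\tilde{\y}^{m-1}\right)_k = \lambda$ for each $k$ on the support of $\tilde{\y}$. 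The one substantive change enters here: strict semi-positivity supplies an index $k$ with $\tilde{y}_k > 0$ and $\left(\mathcal{A}\tilde{\y}^{m-1}\right)_k > 0$, hence $\lambda > 0$ strictly. Then $\min_{\x \in S}\mathcal{A}\x^m = \lambda > 0$, and Lemma \ref{le:22}(ii) promotes this strict minimum to $\mathcal{A}\x^m > 0$ for all $\x \in \mathbb{R}^n_+ \setminus \{\0\}$, i.e.\ strict copositivity.

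The main obstacle, exactly as in Theorem \ref{th:33}, is the reduction to the interior minimizer $\tilde{\w}$ of the sub-tensor problem, so that stationarity on the simplex can be invoked without boundary multipliers; once the identity $\left(\mathcal{A}\tilde{\y}^{m-1}\right)_k = \lambda$ on the support is established, propagating the strict sign of $\lambda$ from the definition of strict semi-positivity is immediate. I would be careful to invoke part (ii) of Lemma \ref{le:22} rather than part (i), since it is precisely the strict characterization of copositivity on the unit simplex that converts a strictly positive minimum into strict copositivity.
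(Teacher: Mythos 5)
Your proposal is correct and follows exactly the route the paper intends: the paper gives no separate proof of Theorem \ref{th:34}, stating only that one repeats the proof of Theorem \ref{th:33} with the inequalities made strict, which is precisely what you carry out (weighting $x_k\left(\mathcal{A}\x^{m-1}\right)_k\leq 0$ for the easy direction, and the simplex-minimizer/KKT argument with $\lambda>0$ and Lemma \ref{le:22}(ii) for the converse). Your attention to the interior minimizer $\tilde{\w}$ avoiding boundary multipliers and to invoking part (ii) rather than (i) of Lemma \ref{le:22} matches the paper's argument faithfully.
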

By Lemma \ref{le21} and Theorem \ref{th:34}, the following conclusion is obvious.
 \begin{Corollary}\label{co:35}\em Let  $\mathcal{A}  = (a_{i_1\cdots i_m})\in S_{m,n}$ be strictly copositive. Then the tensor complementarity problem TCP $(\q, \mathcal{A})$, 	$$\mbox{finding $\x \in \mathbb{R}^n$ such that } \x \geq \0, \q + \mathcal{A}\x^{m-1} \geq \0, \mbox{ and }\x^\top (\q + \mathcal{A}\x^{m-1}) = 0$$ has a solution for all $\q \in \mathbb{R}^n$.
\end{Corollary}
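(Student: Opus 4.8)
The plan is to notice that the assertion of the corollary is, word for word, the statement that $\mathcal{A}$ is a Q-tensor in the sense of Definition \ref{d21}(iii). Hence it suffices to establish the chain $\text{strictly copositive} \Rightarrow \text{strictly semi-positive} \Rightarrow \text{Q-tensor}$, and both of these implications are already recorded in the excerpt, so the proof is a short composition of prior results.

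First I would invoke Theorem \ref{th:34}. By hypothesis $\mathcal{A}\in S_{m,n}$ is symmetric and strictly copositive, so that theorem applies directly and yields that $\mathcal{A}$ is strictly semi-positive. This is the only place where both the symmetry assumption and the strict copositivity assumption are consumed; I would be careful to record that $\mathcal{A}\in S_{m,n}$, since Theorem \ref{th:34} is stated only for symmetric tensors and the equivalence with copositivity can fail otherwise.

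Next I would apply Lemma \ref{le21}, which asserts that every strictly semi-positive tensor must be a Q-tensor. Combined with the previous step, $\mathcal{A}$ is therefore a Q-tensor. Unwinding Definition \ref{d21}(iii), being a Q-tensor means precisely that the TCP $(\q,\mathcal{A})$ admits a solution for every $\q\in\mathbb{R}^n$, which is exactly the claimed conclusion. No further estimates or constructions are needed.

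As for the main obstacle: at the level of this corollary there is essentially none — it is a two-line chaining of Theorem \ref{th:34} and Lemma \ref{le21}, which is why the authors describe it as obvious. The genuine difficulty lives one layer down, inside Lemma \ref{le21}: proving that a strictly semi-positive tensor is a Q-tensor is the step demanding real work, since guaranteeing existence of a complementarity solution for an \emph{arbitrary} $\q$ typically requires a degree-theoretic, fixed-point, or homotopy argument; and Theorem \ref{th:34} itself rests on the compactness-plus-KKT reasoning developed for Theorem \ref{th:33}. Were a self-contained proof required rather than one citing these results, reproducing the existence argument underlying Lemma \ref{le21} would be the crux.
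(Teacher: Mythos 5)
Your proposal is correct and follows exactly the paper's own argument: the authors likewise obtain the corollary by chaining Theorem \ref{th:34} (symmetric strictly copositive $\Rightarrow$ strictly semi-positive) with Lemma \ref{le21} (strictly semi-positive $\Rightarrow$ Q-tensor), which they record as ``obvious.'' Your additional remarks---that symmetry is genuinely needed for Theorem \ref{th:34}, and that the real work is buried in Lemma \ref{le21}---are accurate observations, not deviations from the paper's route.
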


%%%%%%%%%%%%%%%%%%%%%%%%%%%%%%%%%%%%%%%%%%%%%%%%%%%%%%%%%%%%%%%%%%%%%%%%%%%%%%%%%%%%%%%%%%%%%%%%%%%%%%%%%%%%%%%%%%%%%%%%%%%%%%%%%%%%%%%%%%%%%%%%%%%%%%%%%%%%

%\section*{\bf Acknowledgment}
%The authors would like to thank the anonymous
%referees for their valuable suggestions which helped us to improve this manuscript.

%\bibliographystyle{amsplain}


\begin{thebibliography}{}
\bibitem{CD68} Cottle,  R.W., Randow, R.V.: Complementary pivot theory of mathematical programming. Linear Algebra and its Applications 1,  103-125  (1968).
\bibitem{E71} Eaves, B.C.: The linear complementarity problem. Management Science 17, 621-634  (1971).
\bibitem{K72} Karamardian,  S.: The complementarity problem. Mathematical Programming 2, 107-129  (1972).

\bibitem{HXQ} Han, J.Y., Xiu, N.H., Qi, H.D.: Nonlinear complementary Theory and Algorithm.  Shanghai Science and Technology Press, Shanghai. (2006) (in Chinese).
\bibitem{FP11}Facchinei, F., Pang, J.S.: Finite-Dimensional Variational Inequalities and Complementarity Problems: Volume I, Springer-Verlag New York Inc. (2011).
\bibitem{CPS} Cottle, R.W.,  Pang, J.S.,  Stone, R.E.: The Linear Complementarity Problem. Academic Press, Boston (1992)
\bibitem{P79}Pang, J.S.:  On Q-matrices. Mathematical Programming 17, 243-247  (1979).
\bibitem{P81}Pang, J.S.:  A unification of  two classes of Q-matrices. Mathematical Programming 20, 348-352  (1981).
\bibitem{S90} Gowda, M.S.: On Q-matrices. Mathematical Programming 49, 139-141  (1990).
\bibitem{C80} Cottle,  R.W.: Completely Q-matrices. Mathematical Programming 19, 347-351  (1980).

\bibitem{TSM}Motzkin, T.S.: Quadratic forms, National Bureau of Standards Report,1818 (1952) 11-12.

\bibitem{HH}Haynsworth, E., Hoffman, A.J.: Two remarks on copositive matrices, Linear Algebra Appl. 2(1969) 387-392.

\bibitem{DM} Martin, D.H.: Copositlve matrices and definiteness of quadratic forms
	subject to homogeneous linear inequality constraints,  Linear Algebra Appl., 35(1981) 227-258.

\bibitem{HV}V${\ddot{a}}$liaho, H.: Criteria for copositive matrices, Linear Algebra Appl. 81(1986) 19-34.



\bibitem{Qi} Qi, L.: Eigenvalues of a real supersymmetric tensor.   J. Symbolic Comput. 40, 1302-1324 (2005)
\bibitem{ZQZ}
Zhang, L., Qi, L., Zhou,  G.: M-tensors and some applications.   SIAM J. Matrix Anal. Appl.  35(2), 437-452 (2014)
\bibitem{DQW} Ding, W., Qi, L., Wei, Y.: M-tensors and nonsingular
M-tensors.  Linear Algebra Appl.   439, 3264-3278 (2013)
\bibitem{SQ-15} Song, Y., Qi, L.; Properties of Some Classes of Structured Tensors. to appear in:  J. Optim. Theory Appl., 2014 DOI 10.1007/s10957-014-0616-5
\bibitem{QS} Qi, L., Song, Y.: An even order symmetric B tensor is positive definite.  Linear Algebra Appl. 457, 303-312 (2014)
\bibitem{SQ1} Song, Y., Qi, L.; Infinite and finite dimensional Hilbert tensors. Linear Algebra Appl. 451, 1-14 (2014)

\bibitem{SQ} Song, Y., Qi, L.: Spectral properties of positively homogeneous operators induced by higher order tensors.  SIAM J. Matrix Anal. Appl.
 34(4), 1581-1595 (2013).
 \bibitem{SQ2015} Song, Y., Qi, L.;  Properties of Tensor Complementarity Problem and Some Classes of Structured Tensors arXiv:1412.0113v1, Nov. 2014.
\bibitem{Qi1} Qi, L.: Symmetric nonegative tensors and copositive tensors. Linear Algebra Appl. 439, 228-238 (2013)
\bibitem{SQ15} Song, Y., Qi, L.; Necessary and sufficient conditions for copositive tensors. Linear and Multilinear Algebra,  63(1), 120-131 (2015).

\end{thebibliography}
\end{document}